\newtheorem{theorem}{Theorem}[section]
\newtheorem{lemma}{Lemma}[section]
\newtheorem{proposition}{Proposition}[section]
\newtheorem{corollary}{Corollary}[section]
\newcommand{\real}{\mathbb{R}}
\title{On the Properties of Convex Functions over Open Sets}
\author{Yoel Drori}
\affil{Google LLC, Mountain View, CA}
\begin{document}
\maketitle

    \begin{abstract}
        We consider the class of smooth convex functions defined over an open convex set. We show that this class is essentially different than the class of smooth convex functions defined over the entire linear space by exhibiting a function that belongs to the former class but cannot be extended to the entire linear space while keeping its properties. We proceed by deriving new properties of the class under consideration, including an inequality that is strictly stronger than the classical Descent Lemma.
    \end{abstract}

\section{Introduction}

In this note we consider the class of differentiable functions that are convex and $L$-smooth over a convex set. We recall that:
\begin{enumerate}
    \item[(a)] A differentiable function $f$ is called \emph{convex} over a convex set $C$ if 
    \[
        0 \leq f(y) - f(x) - \langle f^\prime(x), y-x\rangle, \quad \forall x,y\in C.
    \]
    \item[(b)] Given $L\geq 0$, a differentiable function $f$ is called \emph{$L$-smooth} over a set $C$ if
    \[
         \| f'(x) - f'(y)\| \leq L \|x - y\|, \quad \forall x,y\in C.
    \]

\end{enumerate}

A well-known property of $L$-smooth convex functions is the Descent Lemma~\cite{bauschke2011convex,beck2017first}:
\begin{theorem}[Descent Lemma]\label{T:descentLemma}
    Let $X\subseteq \real^d$ be an open set, and let $f:X \rightarrow \real$ be a differentiable function that is $L$-smooth and convex over a convex set $C \subseteq X$.
	Then for any $x,y \in C$,
	\begin{equation}\label{eq:descent_lemma}
	0 \leq f(y) - f(x) - \langle f^\prime(x), y-x\rangle \leq \frac{L}{2} \|x-y\|^2.
	\end{equation}
\end{theorem}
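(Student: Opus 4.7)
The left inequality is just the definition of convexity of $f$ over $C$, so the entire content is in the upper bound. The plan is to proceed by the classical fundamental-theorem-of-calculus argument along the segment from $x$ to $y$, which is legitimate here because $C$ is convex and contained in the open set $X$ on which $f$ is differentiable.

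Concretely, for $x,y\in C$, I would consider the auxiliary function $\varphi(t)=f(x+t(y-x))$ for $t\in[0,1]$. Since the segment $\{x+t(y-x):t\in[0,1]\}$ lies in $C\subseteq X$, $\varphi$ is well-defined and differentiable with $\varphi'(t)=\langle f'(x+t(y-x)),y-x\rangle$. Applying the fundamental theorem of calculus gives
\[
    f(y)-f(x)-\langle f'(x),y-x\rangle=\int_0^1 \langle f'(x+t(y-x))-f'(x),\,y-x\rangle\,dt.
\]

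I would then bound the integrand using Cauchy--Schwarz followed by the $L$-smoothness hypothesis applied to the pair $x+t(y-x),x\in C$:
\[
    \langle f'(x+t(y-x))-f'(x),\,y-x\rangle\leq \|f'(x+t(y-x))-f'(x)\|\cdot\|y-x\|\leq L\,t\,\|y-x\|^2.
\]
Integrating over $t\in[0,1]$ yields the factor $\tfrac{1}{2}$, which gives the desired bound $\tfrac{L}{2}\|x-y\|^2$.

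There is no real obstacle in the argument, but the step that warrants care — and that seems to motivate the rest of the paper — is the legitimacy of applying the fundamental theorem of calculus and the $L$-smoothness bound along the entire segment. This is valid here because convexity of $C$ places the whole segment inside $C$, where both differentiability and the Lipschitz bound on $f'$ are available. I would mention this explicitly, since the paper's later discussion hinges on subtleties that arise precisely when the interplay between the convex domain $C$ and the open ambient set $X$ is more delicate.
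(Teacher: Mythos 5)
Your proof is correct: the lower bound is the definition of convexity over $C$, and the upper bound follows from the fundamental theorem of calculus along the segment $[x,y]\subseteq C$ together with Cauchy--Schwarz and the Lipschitz bound on $f'$ over $C$ (note $\varphi'$ is continuous on $[0,1]$ precisely because $f'$ is Lipschitz on the segment, so the integral representation is legitimate). The paper does not prove this lemma --- it cites it as standard --- and your argument is exactly the classical one found in the cited references, with the right emphasis on why convexity of $C$ makes the segment argument work.
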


Note that the Descent Lemma does not make any requirements on $C$ beyond convexity, allowing for some degenerate examples such as the non-convex quadratic function $f(x,y)=xy$, which is nevertheless convex over the set $\{(x,y):y=0\}$.

A standard assumption that is made in this context is that $C$ is an open set. This eliminates the edge cases alluded above, and gives rise to additional basic properties of the class of function, e.g., when the function is twice differentiable, then $\nabla^2 f(x) \succeq 0$ for all $x\in C$. 
For additional properties of convex functions over open sets we refer the reader to the standard texts~\cite{ortega1970iterative,Book:Polyak,rockafellar2015convex}.



Another special and important case is the unconstrained setting, $C=\real^d$. In this setting, a fundamental property that will form the cornerstone of the forthcoming analysis is known:
\begin{theorem}[{\cite[Theorem~2.1.5]{Book:Nesterov}}]\label{T:basicinequnconstrained}
	Let $f:\real^d \rightarrow\real$ be an $L$-smooth convex function (for some $L > 0$). Then for any $x,y\in \real^d$, 
	\begin{equation}\label{eq:basicinequnconstrained}
	\frac{1}{2L} \|f^\prime(x)-f^\prime(y)\|^2 \leq f(y)-f(x)-\langle f^\prime(x), y-x\rangle.
	\end{equation}
\end{theorem}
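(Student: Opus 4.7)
The plan is to reduce the statement to a minimization argument applied to an auxiliary convex function. Fix $x \in \real^d$ and introduce
\[
    \phi(z) := f(z) - \langle f'(x), z\rangle, \quad z \in \real^d.
\]
I will verify the three properties I need of $\phi$: it is convex (as the sum of a convex function and a linear function), $L$-smooth (since $\phi'(z) = f'(z) - f'(x)$, gradient differences are identical to those of $f$), and it attains its global minimum at $z = x$ (since $\phi'(x) = 0$ together with convexity). These are all immediate from the definition.

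Next I would exploit the Descent Lemma (Theorem~\ref{T:descentLemma}), which applies to $\phi$ over all of $\real^d$. The trick is to evaluate it at a gradient step: for any $y \in \real^d$, apply~\eqref{eq:descent_lemma} with the pair $(y, y - \tfrac{1}{L}\phi'(y))$ to obtain
\[
    \phi\Bigl(y - \tfrac{1}{L}\phi'(y)\Bigr) \leq \phi(y) - \tfrac{1}{L}\|\phi'(y)\|^2 + \tfrac{L}{2}\cdot \tfrac{1}{L^2}\|\phi'(y)\|^2 = \phi(y) - \tfrac{1}{2L}\|\phi'(y)\|^2.
\]
Because $x$ is a global minimizer of $\phi$, the left-hand side is lower bounded by $\phi(x)$, yielding
\[
    \tfrac{1}{2L}\|\phi'(y)\|^2 \leq \phi(y) - \phi(x).
\]

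The final step is to substitute back: $\phi'(y) = f'(y) - f'(x)$ and $\phi(y) - \phi(x) = f(y) - f(x) - \langle f'(x), y - x\rangle$, which gives exactly~\eqref{eq:basicinequnconstrained}. There is no real obstacle to overcome here; the only ``clever'' ingredient is choosing to evaluate the Descent Lemma at the canonical gradient step $y - \tfrac{1}{L}\phi'(y)$, a choice motivated by minimizing the quadratic upper bound from~\eqref{eq:descent_lemma}. The proof relies crucially on $C = \real^d$ so that this step stays in the domain, foreshadowing the subtlety the paper intends to explore when $C$ is merely an open convex subset.
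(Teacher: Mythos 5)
Your proof is correct and is essentially the same argument the paper uses (for its local refinement, Theorem~\ref{T:local}, which follows Nesterov's original proof of this statement): define the shifted function $\psi$, observe it is minimized at $x$, and apply the Descent Lemma at the gradient step $y - \tfrac{1}{L}\psi'(y)$. The only cosmetic difference is that your $\phi$ omits the additive constant $\langle f'(x), x\rangle$ present in the paper's $\psi$, which is immaterial.
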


The bound in Theorem~\ref{T:basicinequnconstrained} is \emph{realizable} in the following sense: suppose
\[
    \frac{1}{2L} \|g_x-g_y\|^2 \leq f_y-f_x-\langle g_x, y-x\rangle
\]
holds for some $x, y\in\real^d$, $f_x, f_y\in \real$ and $g_x, g_y \in \real^d$, then there exists an $L$-smooth and convex function $F:\real^d\rightarrow\real$ such that $F(x)=f_x$, $F'(x)=g_x$, $F(y)=f_y$ and $F'(y)=g_y$. 
This has been recently shown in~\cite{taylor2017smooth} and also in~\cite{azagra2017extension} in a more general case over Hilbert spaces. For constructive results we refer the reader to~\cite{drori2017exact} and~\cite{daniilidis2018explicit} (for the general case).






Realizable bounds capture the basic aspects of the class: they are unique given the information they involve and cannot be improved, making them ideal for studying and developing optimization methods for the class under consideration.
In case of functions that are convex over an open or arbitrary convex set, however, realizable bounds are not known, and it is the goal of this note, inspired by open questions raised in~\cite{de2017worst}, to take some steps towards finding such bounds.

The main results of this note are as follows.
\begin{enumerate}
    \item We show that property~\eqref{eq:basicinequnconstrained} does \emph{not} hold in general for functions whose domain is an open convex set
    by constructing a smooth convex function defined over an open set that does not satisfy~\eqref{eq:basicinequnconstrained} over its domain.
    This shows that the assumption in Theorem~\ref{T:basicinequnconstrained} on the domain of the function is  indeed required, giving negative answers to the questions raised in~\cite{de2017worst}.
    \item We show that property~\eqref{eq:basicinequnconstrained} \emph{does} hold for convex functions defined over an open convex set, given that the two points $x, y$ are close enough.
    \item We derive a strictly stronger version of the Descent Lemma under the additional assumption that the function is convex over an open convex set.
    \item We present a system of inequalities that holds for any $L$-smooth and convex function defined over an open set and is realizable by an $L$-smooth and convex function defined over a segment.
    We show how this system can be used to form bounds that can be approximated using standard numerical methods, and compare the resulting bounds to existing analytical bounds.
\end{enumerate}

\section{A counter-example}
We begin by constructing a 1-smooth and convex function defined over a half-plane that cannot be extended to the entire plane while keeping its smoothness and convexity properties.
The construction is done by creating a ``convex-spline'' which consists of the function $F_i:\real^2 \rightarrow \real$, $i=1,\dots,4$ defined by
\begin{align*}
	F_1(x_0, x_1) :=& \tfrac{1}{2}(x_0^2 + x_1^2), \\
	F_2(x_0, x_1) :=& \tfrac{1}{2}(x_0^2 + x_1^2) - \tfrac{1}{20}(3 x_0 - x_1 - \tfrac{1}{12})^2 \\& \left[\equiv \tfrac{1}{2}((x_0-\tfrac{3}{4})^2+ (x_1+\tfrac{1}{4})^2) -\tfrac{1}{20}(3 x_0 - x_1 - \tfrac{31}{12})^2 +\tfrac{1}{48}\right], \\
	F_3(x_0, x_1) :=& \tfrac{1}{2}((x_0-\tfrac{3}{4})^2+ (x_1+\tfrac{1}{4})^2) + \tfrac{1}{48}, \\
	F_4(x_0, x_1) :=& \tfrac{1}{2}((x_0-\tfrac{3}{4})^2+ (x_1+\tfrac{1}{4})^2) - \tfrac{1}{10}(x_0-2x_1-\tfrac{49}{48})^2 +\tfrac{1}{48}.
\end{align*}
The function $F(x_0, x_1): \{ (x_0, x_1): x_1 > -23/240\} \rightarrow \real$ is then defined by
\[
    F(x_0, x_1):= 
    \begin{cases}
        F_1(x_0, x_1), & 3 x_0 - x_1 \leq \tfrac{1}{12}, \\
        F_2(x_0, x_1), & \tfrac{1}{12} \leq 3 x_0 - x_1 \leq \tfrac{31}{12}, \\
        F_3(x_0, x_1), & \tfrac{31}{12} \leq 3 x_0 - x_1\,\&\, x_0-2x_1 \leq \tfrac{49}{48}, \\
        F_4(x_0, x_1), & \tfrac{49}{48} \leq x_0-2x_1.
    \end{cases}
\]
See Figure~\ref{fig:counter} for a contour plot depicting $F$. 

\begin{figure}
    \centering
    \includegraphics[scale=1]{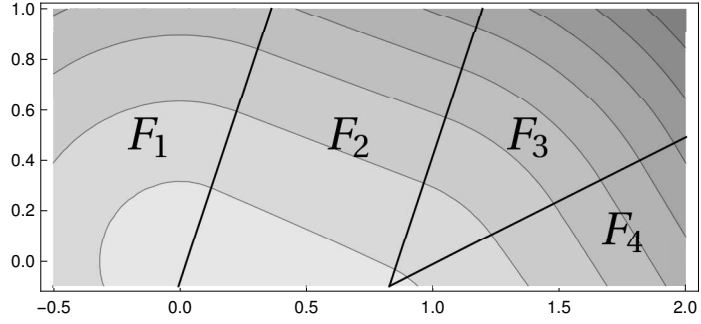}
    \caption{A contour plot of the function $F$.}
    \label{fig:counter}
\end{figure}

\begin{proposition}\label{P:counter}
\begin{enumerate}
    \item The function $F$ is convex and 1-smooth over its domain.
    \item The function $F$ does not satisfy~\eqref{eq:basicinequnconstrained} for $x=(0,0)^T$ and $y=(2,0)^T$.
\end{enumerate}
\end{proposition}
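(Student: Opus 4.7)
The plan is to handle Part~1 by verifying that each constituent piece $F_i$ is individually convex and $1$-smooth, that the four regions partition the stated half-plane, and that the pieces glue together in a $C^1$ manner. Part~2 then reduces to direct evaluation at the two prescribed points.

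For the individual pieces, each $F_i$ has the form ``isotropic quadratic minus a rank-one quadratic''. A direct Hessian computation shows that the rank-one terms are scaled so that $\nabla^2 F_i = I - vv^\top$ for an appropriate unit vector $v$, hence with eigenvalues $0$ and $1$. Thus $0 \preceq \nabla^2 F_i \preceq I$ for every $i$, and each piece is individually convex and $1$-smooth on $\real^2$.

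Next I would verify the tiling and the $C^1$ matching. The interfaces between adjacent pieces are the affine lines $3x_0 - x_1 = \tfrac{1}{12}$, $3x_0 - x_1 = \tfrac{31}{12}$ and $x_0 - 2x_1 = \tfrac{49}{48}$. On each interface, the difference between the two adjacent pieces is of the form $c\,\ell(x)^2$ where $\ell$ is the defining affine function of that interface; the bracketed rewriting of $F_2$ exhibits this matching between $F_2$ and $F_3$ explicitly. Since such a difference and its gradient both vanish on $\{\ell = 0\}$, both $F$ and $\nabla F$ are continuous across every interface. The three interface lines meet at the single point $(\tfrac{199}{240},-\tfrac{23}{240})$, and the restriction $x_1 > -\tfrac{23}{240}$ is precisely what excises this triple point so that the four regions partition the domain cleanly.

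With $F \in C^1$ on a convex open set and $\nabla^2 F$ defined almost everywhere with $0 \preceq \nabla^2 F \preceq I$, $1$-smoothness follows by integrating $\nabla^2 F$ along a segment, and convexity follows because the restriction of $F$ to any segment is a $C^1$ function whose second derivative is nonnegative a.e. For Part~2, the point $(0,0)$ lies in the first region, giving $F(0,0)=0$ and $\nabla F(0,0)=0$, while $(2,0)$ lies in the fourth region; substituting into $F_4$ and $\nabla F_4$ yields explicit rational values for $F(2,0)$ and $\nabla F(2,0)$, and plugging these into~\eqref{eq:basicinequnconstrained} with $L=1$ reduces the claim to the arithmetic inequality $\tfrac{1}{2}\|\nabla F(2,0)\|^2 > F(2,0)$.

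I expect the main obstacle to be the global convexity step: $C^1$-gluing of convex pieces need not in general yield a globally convex function, so some care is needed in invoking the almost-everywhere PSD Hessian together with $C^1$ regularity rather than reasoning purely piecewise. All remaining steps amount to mechanical algebra and arithmetic that only need to be carried out carefully.
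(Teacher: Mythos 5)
Your proposal is correct and follows the same skeleton as the paper's proof (verify each piece, check the $C^1$ gluing, evaluate at the two points), but it is considerably more self-contained: where the paper disposes of the gluing step by citing a corollary of Bauschke et al.\ on piecewise convex $C^1$ functions and a ``well-known result'' for the Lipschitz part, you supply explicit Hessian computations ($\nabla^2 F_i = I - vv^\top$ with $v$ a unit vector, so $0 \preceq \nabla^2 F_i \preceq I$), an explicit check that adjacent pieces differ by $c\,\ell(x)^2$ across each interface $\{\ell=0\}$, and a direct one-dimensional argument along segments. This buys a proof that does not lean on external extension/gluing results, at the cost of having to justify the reduction to segments yourself. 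Part 2 is identical to the paper: direct evaluation gives $F(2,0)=\tfrac{16991}{23040}$, $F'(2,0)=(\tfrac{253}{240},\tfrac{77}{120})^T$, and $\tfrac{1}{2}\|F'(2,0)\|^2 = \tfrac{17545}{23040} > \tfrac{16991}{23040}$.

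Two details in your write-up need tightening, though neither is fatal. First, the three interface lines do \emph{not} meet at a single point: $3x_0-x_1=\tfrac{1}{12}$ and $3x_0-x_1=\tfrac{31}{12}$ are parallel; only the latter meets $x_0-2x_1=\tfrac{49}{48}$ at $(\tfrac{199}{240},-\tfrac{23}{240})$. The correct geometric fact behind the clean partition is that the overlap of the half-plane $\{x_0-2x_1\ge\tfrac{49}{48}\}$ with the regions $\{3x_0-x_1\le\tfrac{31}{12}\}$ (where $F_1$ or $F_2$ would also be prescribed) lies entirely in $\{x_1\le-\tfrac{23}{240}\}$, which the domain excludes; your conclusion is right but the stated reason is not. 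Second, ``$C^1$ with second derivative nonnegative a.e.\ implies convex'' is false in general (take $g$ with $g'$ equal to minus the Cantor function: $g''=0$ a.e.\ yet $g$ is concave). What rescues the argument here is that the restriction of $F$ to any segment is $C^1$ and piecewise quadratic with finitely many breakpoints, so its derivative is continuous, piecewise affine and nondecreasing on each piece, hence nondecreasing globally; the same absolute-continuity of $F'$ along segments is what licenses integrating the Hessian for the $1$-smoothness bound. You correctly flag this as the delicate step, but the proof should invoke the finite piecewise structure explicitly rather than the a.e.\ Hessian alone.
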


\begin{proof}
\begin{enumerate}
    \item It is straightforward to verify that the quadratic functions $F_1,\dots,F_4$ are convex and 1-smooth. Now, since $F$ is piecewise convex and continuously differentiable it follows that $F$ is convex (see~\cite[Corollary~5.5]{bauschke2016convexity}).
    Finally, the 1-smoothness of $F$ follows from the 1-smoothness of $F_i$ by a well-known result on the Lipschitz-continuity of piecewise-continuous functions.
    \item We have
    \begin{align*}
        & F(2, 0) = F_4(2, 0) = \tfrac{16991}{23040},
        && F'(2, 0) = F_4'(2, 0) = (\tfrac{253}{240}, \tfrac{77}{120})^T, \\
        & F(0, 0) = F_1(0, 0) = 0, 
        && F'(0, 0) = F_1'(0, 0) = (0, 0)^T,
    \end{align*}
    and therefore
    \[
        \tfrac{1}{2}\|F'(2,0)-F'(0,0)\|^2=\tfrac{17545}{23040} > \tfrac{16991}{23040} = F(2,0) - F(0,0)- \langle F'(0,0), (2,0)^T \rangle.
    \]
\end{enumerate}
\end{proof}

As an immediate result of the previous claim, there is no 1-smooth convex function $G: \real^2\rightarrow\real$ such that $G(0,0)=F(0,0)$, $G'(0,0)=F'(0,0)$, $G(2,0)=F(2,0)$ and $G'(2,0)=F'(2,0)$ (as such an example will form a contradiction to Theorem~\ref{T:basicinequnconstrained}).
This establishes that the class of smooth and convex functions over an open set is different than the class of unconstrained smooth and convex functions restricted to this set, answering negatively a question raised in~\cite{de2017worst} of whether all smooth and convex functions defined over an open set can be extended to the entire domain.

\section{Properties of convex functions over open sets}
In this section we consider the properties of convex functions defined over an open set. We first show that inequality~\eqref{eq:basicinequnconstrained} holds for any two points that are close enough. Next, we derive a new analytical bound that holds for any two points in the function's domain. Finally, we discuss a stronger bound given as a system of inequalities, which can nevertheless be efficiently evaluated numerically.

\subsection{A local property}
The first main result establishes that locally, a smooth convex function defined over an open set behaves similarly to a smooth and convex function defined over the entire linear space.
\begin{theorem}\label{T:local}
    Let $f:C \rightarrow \real$ be an $L$-smooth and convex function over an open convex set $C \subsetneq \real^d$. Then for any $x,y \in C$ such that $\|x-y\|< \mathrm{dist}(y, \real^d \setminus C)$,
    \begin{equation*}
	    \frac{1}{2L} \|f^\prime(x)-f^\prime(y)\|^2 \leq f(y) - f(x) - \langle f^\prime(x), y - x\rangle.
    \end{equation*}
\end{theorem}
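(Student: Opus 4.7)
The plan is to mimic the standard proof of Theorem~\ref{T:basicinequnconstrained} on a convex subset of $C$ where the usual gradient-step auxiliary point is guaranteed to live. The classical argument considers $\phi(z) := f(z) - \langle f'(x), z\rangle$, notes that $x$ is a critical point of $\phi$ (and hence a minimizer, by convexity of $\phi$), and applies the Descent Lemma to compare $\phi(y)$ with $\phi$ evaluated at the gradient step $y - \tfrac{1}{L}\phi'(y) = y - \tfrac{1}{L}(f'(y) - f'(x))$. The only place this proof can fail in the open-set setting is if this auxiliary point escapes $C$; the hypothesis $\|x-y\| < \mathrm{dist}(y, \real^d\setminus C)$ is precisely what is needed to rule that out.

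Concretely, I would first set $r := \|x-y\|$ and observe that since $r < \mathrm{dist}(y,\real^d\setminus C)$, the closed ball $B := \overline{B}(y,r)$ is a convex subset of $C$. By $L$-smoothness,
\[
    \left\| \tfrac{1}{L}\bigl(f'(y) - f'(x)\bigr) \right\| \;\le\; \|x-y\| \;=\; r,
\]
so the auxiliary point $z := y - \tfrac{1}{L}(f'(y) - f'(x))$ lies in $B \subseteq C$, and of course $x, y \in B$ as well.

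Next I would apply the Descent Lemma (Theorem~\ref{T:descentLemma}) to $\phi$ on the convex set $B$, using the points $y$ and $z$, which gives
\[
    \phi(z) \;\le\; \phi(y) + \langle \phi'(y), z - y\rangle + \tfrac{L}{2}\|z-y\|^2 \;=\; \phi(y) - \tfrac{1}{2L}\|f'(y)-f'(x)\|^2.
\]
Since $\phi$ is convex on the open set $C$ and $\phi'(x) = 0$, the point $x$ is a global minimizer of $\phi$ over $C$, so $\phi(x) \le \phi(z)$. Chaining these two inequalities and substituting back the definition of $\phi$ yields the claimed bound after a simple rearrangement.

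The only non-routine step is the containment $z \in C$, and it is handled cleanly by combining the distance hypothesis with the standard $L$-smoothness estimate on $\|f'(y) - f'(x)\|$. Everything else is a direct transcription of the usual unconstrained proof, now justified because all invoked points lie in $B \subseteq C$.
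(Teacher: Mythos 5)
Your proof is correct and follows essentially the same route as the paper: both arguments shift $f$ by the linearization at $x$ to get an auxiliary convex function minimized at $x$, use the hypothesis $\|x-y\|<\mathrm{dist}(y,\real^d\setminus C)$ together with $L$-smoothness to show the gradient-step point $y-\tfrac{1}{L}(f'(y)-f'(x))$ stays in $C$, and then apply the Descent Lemma at that point. The only cosmetic difference is that you localize to the closed ball $\overline{B}(y,\|x-y\|)$ while the paper works directly in $C$; both are valid.
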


\begin{proof}
The proof is a refinement of the proof in~\cite[Theorem~2.1.5]{Book:Nesterov}.
Denote
\[
	\psi(t) := f(t) - \langle f'(x), t - x \rangle,
\]
then clearly $\psi$ is $L$-smooth, convex and satisfies $\psi'(x)=0$. Furthermore, from the Lipschitz-continuity of $\psi$ we have
\[
	\| \psi'(y) \| = \| \psi'(y) - \psi'(x) \| \leq L \|y-x\| < L \mathrm{dist}(y, \real^d \setminus C),
\]
hence $\mathrm{dist}(y, y - \frac{1}{L} \psi'(y))< \mathrm{dist}(y, \real^d \setminus C)$ which means that $y - \frac{1}{L} \psi'(y)\in C$.
By applying the Descent Lemma~\eqref{eq:descent_lemma} on the points $y$ and $y- \frac{1}{L} \psi'(y)$ we have
\begin{align*}
	& \psi(y - \frac{1}{L} \psi'(y)) - \psi(y) - \langle \psi'(y), y - \frac{1}{L} \psi'(y) - y \rangle \leq \frac{L}{2} \| y - (y - \frac{1}{L} \psi'(y)) \|^2
\end{align*}
or
\begin{align*}
	& \psi(y - \frac{1}{L} \psi'(y)) - \psi(y) - \langle \psi'(y), - \frac{1}{L} \psi'(y) \rangle \leq \frac{L}{2} \| \frac{1}{L} \psi'(y) \|^2
\end{align*}
and we get
\begin{align*}
	& \psi(y - \frac{1}{L} \psi'(y))  \leq \psi(y)  - \frac{1}{2L} \| \psi'(y) \|^2.
\end{align*}
Since $x$ is optimal for $\psi$ (recall that $\psi'(x)=0$), we have
\begin{align*}
	& \psi(x) \left( \leq \psi(y - \frac{1}{L} \psi'(y))\right)  \leq \psi(y)  - \frac{1}{2L} \| \psi'(y) \|^2
\end{align*}
i.e.,
\begin{align*}
	& f(x)  \leq f(y) - \langle f'(x), y - x \rangle  - \frac{1}{2L} \| f'(x) - f'(y) \|^2
\end{align*}
which completes the proof.
\end{proof}

An immediate and useful result follows:
\begin{corollary}\label{C:stepByStep}
Let $f:C \rightarrow \real$ be an $L$-smooth and convex function over an open convex set $C \subsetneq \real^d$ and suppose $x, y\in C$ .
Then for any
\begin{equation}\label{E:boundOnN}
    N > \frac{\|y-x\|}{\min(\mathrm{dist}(x, \real^d \setminus C), \mathrm{dist}(y, \real^d \setminus C))},
\end{equation}
the system of inequalities
\begin{align}
    & \frac{1}{2L} \|f^\prime(x_i)-f^\prime(x_{i+1})\|^2 \leq f(x_i)-f(x_{i+1}) - \langle f^\prime(x_{i+1}), x_i - x_{i+1} \rangle, && i=0,\dots,N-1, \label{C:upperbound}\\
    & \frac{1}{2L} \|f^\prime(x_i)-f^\prime(x_{i+1})\|^2 \leq f(x_{i+1})-f(x_i) - \langle f^\prime(x_{i}), x_{i+1} - x_i \rangle, && i=0,\dots,N-1, \label{C:lowerbound}
\end{align}
holds with
\[
    x_i := x + \frac{i}{N}(y - x), \quad i=0,\dots,N.
\]
\end{corollary}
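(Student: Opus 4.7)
The plan is to reduce everything to repeated applications of Theorem~\ref{T:local} on the $N$ consecutive pairs $(x_i, x_{i+1})$ along the sampled segment, in both directions. Since $\|x_i-x_{i+1}\| = \|y-x\|/N$ is the same constant across $i$, it is enough to verify that this common step length is strictly less than $\mathrm{dist}(x_j, \real^d \setminus C)$ at every sample point $x_j$, $j=0,\dots,N$; then the hypothesis of Theorem~\ref{T:local} is met for each pair and each of the two orderings.

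The first substantive step is to control the distance at the intermediate points. I would record the auxiliary fact that $z \mapsto \mathrm{dist}(z, \real^d\setminus C)$ is concave on the convex set $C$: if $B(z_1,r_1), B(z_2,r_2) \subseteq C$, then by convexity of $C$ we also have $B(\lambda z_1 + (1-\lambda) z_2,\, \lambda r_1 + (1-\lambda) r_2) \subseteq C$, and taking suprema over admissible radii gives concavity. In particular, since each $x_i$ lies on the segment from $x$ to $y$,
\[
    \mathrm{dist}(x_i, \real^d \setminus C) \;\geq\; \min\bigl(\mathrm{dist}(x, \real^d \setminus C),\ \mathrm{dist}(y, \real^d \setminus C)\bigr), \quad i=0,\dots,N.
\]
Combined with the lower bound~\eqref{E:boundOnN} on $N$, this gives $\|x_i - x_{i+1}\| = \|y-x\|/N < \mathrm{dist}(x_j, \real^d \setminus C)$ for both $j=i$ and $j=i+1$.

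With this in hand, the conclusion is obtained by two applications of Theorem~\ref{T:local} to each pair. Invoking the theorem with its $(x,y)$ taken to be $(x_{i+1}, x_i)$ produces~\eqref{C:upperbound}, while taking $(x,y) = (x_i, x_{i+1})$ produces~\eqref{C:lowerbound}. Ranging $i$ over $0,\dots,N-1$ yields the full system. The only real content of the argument is the concavity observation ensuring that the intermediate $x_i$ do not come closer to $\real^d \setminus C$ than the endpoints do; everything else is a mechanical book-keeping of how the two points in Theorem~\ref{T:local} are renamed in each direction.
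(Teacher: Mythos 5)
Your proof is correct and follows the same route as the paper: verify that the common step length $\|y-x\|/N$ is below the distance to $\real^d\setminus C$ at every sample point, then apply Theorem~\ref{T:local} to each consecutive pair in both orderings. The only difference is that you justify the inequality $\mathrm{dist}(x_i,\real^d\setminus C)\geq\min(\mathrm{dist}(x,\real^d\setminus C),\mathrm{dist}(y,\real^d\setminus C))$ via concavity of the distance-to-complement function, a step the paper asserts without proof.
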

\begin{proof}
We have
\begin{align*}
    \|x_{i+1} - x_i\| & = \frac{\|y-x\|}{N} < \min(\mathrm{dist}(x, \real^d \setminus C), \mathrm{dist}(y, \real^d \setminus C)) \\
    & \leq \min(\mathrm{dist}(x_i, \real^d \setminus C), \mathrm{dist}(x_{i+1}, \real^d \setminus C)),
\end{align*}
where the last inequality follows since $x_i$ are points along the segment $[x,y]$. We conclude that Theorem~\ref{T:local} applies for all pairs of points $x_i$, $x_{i+1}$, hence~\eqref{C:upperbound} and~\eqref{C:lowerbound} follow.
\end{proof}

\subsection{A global property}
As a consequence of Corollary~\ref{C:stepByStep}, an analytical bound connecting every two points in the set can be established.


\begin{theorem}\label{T:global}
Let $f:C \rightarrow \real$ be an $L$-smooth and convex function over an open convex set $C \subsetneq \real^d$. Then for any $x,y\in C$
\begin{equation}\label{ineq:analytical}
 \frac{\langle f'(y) - f'(x), y - x \rangle^2}{2 L \|y-x\|^2} \leq f(y) - f(x) - \langle f'(x), y - x \rangle.
\end{equation}
\end{theorem}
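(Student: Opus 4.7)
The plan is to apply Corollary~\ref{C:stepByStep} to a suitably fine partition of the segment $[x,y]$ and combine the resulting chain of inequalities with Cauchy--Schwarz and an algebraic telescoping identity, in a way that cancels the dependence on the number of subdivisions $N$. Assuming $x\neq y$ (otherwise the inequality is trivial), fix any $N$ satisfying~\eqref{E:boundOnN} -- one exists since $C$ is open with $x,y\in C$ -- set $v:=y-x$, and denote $g_i:=f'(x_i)$, $\alpha_i:=\langle g_i,v\rangle$, $b_i:=\alpha_{i+1}-\alpha_i=\langle g_{i+1}-g_i,v\rangle$ for $i=0,\dots,N-1$.

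Two elementary inputs are needed. Convexity of $f$ yields $b_i\geq 0$, while $L$-smoothness combined with Cauchy--Schwarz yields both $b_i\leq \|g_{i+1}-g_i\|\,\|v\|\leq L\|v\|^2/N$ and $\|g_{i+1}-g_i\|^2\geq b_i^2/\|v\|^2$. Substituting the latter estimate into~\eqref{C:lowerbound} and summing over $i$, while writing $\phi_i:=\sum_{j<i}b_j$ so that $\alpha_i=\alpha_0+\phi_i$ and $\tfrac1N\sum_i\alpha_i=\alpha_0+\tfrac1N\sum_i\phi_i$, gives
$\sum_i b_i^2/(2L\|v\|^2)+\tfrac1N\sum_i\phi_i\leq A$, where $A$ denotes the right-hand side of~\eqref{ineq:analytical}.

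Finally, expanding $\delta^2:=\langle f'(y)-f'(x),v\rangle^2=(\sum_i b_i)^2$ as $\sum_i b_i^2+2\sum_i\phi_i b_i$ and using $b_i\leq L\|v\|^2/N$ together with $\phi_i\geq 0$ gives $\delta^2\leq \sum_i b_i^2+(2L\|v\|^2/N)\sum_i\phi_i$. Multiplying the previous display by $2L\|v\|^2$ and comparing then delivers $\delta^2\leq 2L\|v\|^2 A$, which is exactly~\eqref{ineq:analytical}. The main subtlety -- and really the whole content of the argument -- is recognizing this algebraic pairing: the quantities $\sum_i b_i^2$ and $(2L\|v\|^2/N)\sum_i\phi_i$ appear on both sides, and after they cancel the surviving bound is independent of $N$, so that, despite each inequality in Corollary~\ref{C:stepByStep} being tied to a specific partition, the conclusion is a clean analytical inequality between $x$ and $y$.
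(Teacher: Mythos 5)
Your proof is correct, and although it launches from the same point as the paper's argument --- subdividing $[x,y]$ via Corollary~\ref{C:stepByStep} and summing the inequalities \eqref{C:lowerbound} --- the way you aggregate the chain is genuinely different. The paper keeps the full gradient differences $f'(x_i)-f'(x_{i+1})$ as vectors: it adds \eqref{C:upperbound} and \eqref{C:lowerbound} to obtain \eqref{E:proof:combined}, multiplies these by the specially constructed weights in \eqref{E:proof:alpha}, completes squares, and then needs the technical Lemma~\ref{L:sum} to evaluate the resulting sum. You instead project everything onto the direction $v=y-x$ at the outset, reducing the problem to the scalars $b_i=\langle f'(x_{i+1})-f'(x_i),v\rangle$, for which convexity and $L$-smoothness give the elementary two-sided bound $0\le b_i\le L\|v\|^2/N$ and Cauchy--Schwarz gives $\|f'(x_{i+1})-f'(x_i)\|^2\ge b_i^2/\|v\|^2$ (which weakens \eqref{C:lowerbound} in the correct direction). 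The identity $\bigl(\sum_i b_i\bigr)^2=\sum_i b_i^2+2\sum_i\phi_i b_i$ then plays the role of the paper's multiplier construction, and the terms $\sum_i b_i^2$ and $(2L\|v\|^2/N)\sum_i\phi_i$ cancel exactly against the summed form of \eqref{C:lowerbound}, leaving an $N$-free inequality; I verified each step and the cancellation is exact, so the constant $2L$ is recovered without loss. What your route buys is a shorter, more elementary proof that dispenses with \eqref{C:upperbound}, the weight construction and the appendix lemma, and makes transparent why the bound depends only on the projection $\langle f'(y)-f'(x),v\rangle$; what it forgoes is the explicit dual-multiplier certificate, which is the viewpoint the paper reuses in Section~\ref{S:numerical} to set up the numerically derived bounds. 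One cosmetic point: your $\alpha_i$ collides with the multipliers of \eqref{E:proof:alpha}, so rename it if this is folded into the text.
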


\begin{proof}
For the sake of simplicity we first establish the result for the case where $f$ is 1-smooth and $f'(x)=0$.

Let $N$ and $x_0, \dots, x_N$ be defined according to Corollary~\ref{C:stepByStep}, then it follows that inequalities~\eqref{C:upperbound} and~\eqref{C:lowerbound} hold.
We start by adding all inequalities in~\eqref{C:lowerbound} together, obtaining
\begin{align*}
    & f(y) - f(x) = f(x_N) - f(x_0) \\
    & \geq \frac{1}{2} \sum_{i=0}^{N-1} \|f^\prime(x_i)-f^\prime(x_{i+1})\|^2  + \frac{1}{N} \sum_{i=0}^{N-1} \langle f^\prime(x_i), y - x \rangle \\
    & = \sum_{i=0}^{N-1}\left( \frac{1}{2} \|f^\prime(x_i)-f^\prime(x_{i+1})\|^2  + \frac{i+1}{N} \langle f^\prime(x_i) - f^\prime(x_{i+1}), y - x \rangle \right) + \langle f^\prime(x_N), y - x \rangle.
\end{align*}

Now, adding~\eqref{C:upperbound} and~\eqref{C:lowerbound} together, we reach
\begin{align}
    & \|f^\prime(x_i)-f^\prime(x_{i+1})\|^2 \leq - \frac{1}{N} \langle f^\prime(x_{i}) - f^\prime(x_{i+1}), y - x \rangle, && i=0,\dots,N-1.\label{E:proof:combined}
\end{align}
Let us denote
\begin{equation}\label{E:proof:alpha}
\begin{aligned}
    \xi &:= N  - \frac{\langle f'(y), y-x \rangle }{\|y-x\|^2}N, \\
    \alpha_i &:= \max\left(0, \xi - i - 1, i - \xi \right), \quad i=0,\dots,N-1,
\end{aligned}
\end{equation}
then multiplying the inequalities in~\eqref{E:proof:combined} by $\alpha_i$ respectively, we get
\begin{align*}
    0 
    & \geq \sum_{i=0}^{N-1} \left( \alpha_i \|f^\prime(x_i)-f^\prime(x_{i+1})\|^2 + \frac{\alpha_i}{N} \langle f^\prime(x_{i}) - f^\prime(x_{i+1}), y - x \rangle\right).
\end{align*}
Finally, adding the two last inequalities together and recalling that $f'(x_0)=f(x)=0$ we have
\begin{align*}
    & f(y) - f(x) \\
    & \geq \sum_{i=0}^{N-1} \left( \frac{2\alpha_i + 1}{2} \|f^\prime(x_i)-f^\prime(x_{i+1})\|^2 + \frac{\alpha_i + i + 1}{N} \langle f^\prime(x_{i}) - f^\prime(x_{i+1}), y - x \rangle\right) + \langle f^\prime(x_N), y - x \rangle \\
    & = \sum_{i=0}^{N-1} \left( \frac{2\alpha_i + 1}{2} \|f^\prime(x_i)-f^\prime(x_{i+1})\|^2 +  \frac{\alpha_i  - \xi + i + 1}{N} \langle f^\prime(x_{i}) - f^\prime(x_{i+1}), y - x \rangle\right) \\&\quad + \frac{N - \xi}{N}\langle f^\prime(x_N), y - x \rangle \\
    & =  \sum_{i=0}^{N-1} \left( \frac{2\alpha_i + 1}{2}\|f^\prime(x_i)-f^\prime(x_{i+1}) + \frac{\alpha_i - \xi + i + 1}{N(2\alpha_i + 1)} (y-x)\|^2 - \frac{(\alpha_i - \xi + i + 1)^2}{2N^2(2\alpha_i + 1)} \|y-x\|^2 \right) \\&\quad + \frac{N - \xi}{N}\langle f^\prime(x_N), y - x \rangle \\
    & \geq - \frac{1}{2N^2} \sum_{i=0}^{N-1} \frac{(\alpha_i - \xi + i + 1)^2}{2\alpha_i + 1} \|y-x\|^2 + \frac{N - \xi}{N}\langle f^\prime(x_N), y - x \rangle \\
    & = -\frac{\langle f'(x_N), y-x \rangle^2}{2\|y-x\|^2} + \frac{\langle f'(x_N), y-x \rangle^2}{\|y-x\|^2} = \frac{\langle f'(x_N), y-x \rangle^2}{2\|y-x\|^2},
\end{align*}
where the sum is evaluated in Lemma~\ref{L:sum} in the appendix.

To complete the proof, consider the general case where the assumptions $L=1$ and $f'(x)=0$ does not necessarily hold. By setting $\phi(z) := \frac{1}{L} (f(z)-\langle f'(x), z - x\rangle)$, we get that $\phi$ satisfies the requirements above and $\phi'(z) = \frac{1}{L}(f'(z)-f'(x))$, hence
\[
    \frac{\langle \phi'(y), y - x\rangle^2}{2\|y-x\|^2} \leq \phi(y) - \phi(x),
\]
which after substitution of the definition of $\phi$ establishes the claim.
\end{proof}

As noted above, the bound~\eqref{ineq:analytical} is strictly stronger than the Descent Lemma~\eqref{eq:descent_lemma}. Indeed, the lower bound is trivially stronger, and regarding the upper bound, applying Theorem~\ref{T:global} with $x$ and~$y$ switched then subtracting $\langle f'(x), y-x\rangle$ from both sides, we reach
\begin{align*}
    f(y) - f(x) - \langle f'(x), y-x\rangle \leq \langle f'(y) - f'(x), y-x\rangle - \frac{\langle f'(y) - f'(x), y - x \rangle^2}{2 L \|y-x\|^2} \\
    \leq \langle f'(y) - f'(x), y-x\rangle \leq \| f'(y) - f'(x)\|\|y-x\| \leq L \|y-x\|^2.
\end{align*}
A comparison between the bounds 
is depicted in Figure~\ref{fig:compare}.

\begin{figure}
    \centering
    \includegraphics[width=0.8\textwidth]{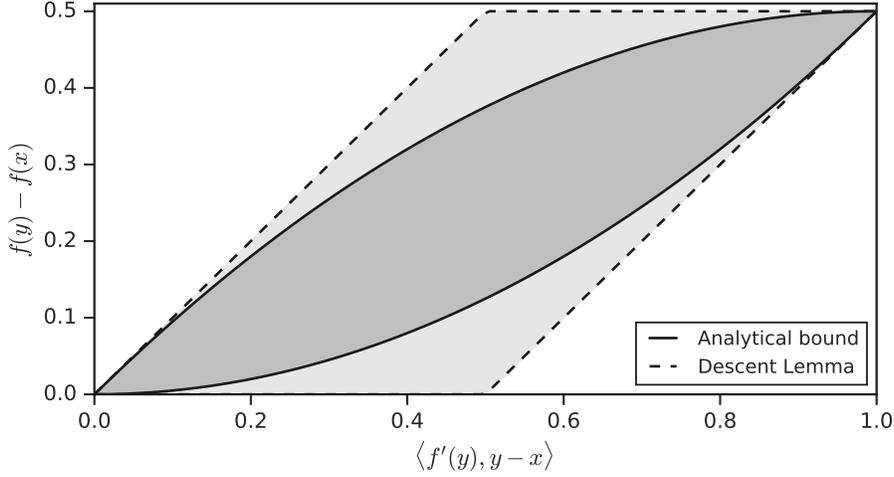}
    \caption{The allowed region for $f(y)-f(x)$ according to the Descent Lemma (outer) and according to Theorem~\ref{T:global} (inner) as a function of $\langle f'(y), y-x\rangle$, when $L=1$, $f'(x)=0$ and $\|y-x\|^2=1$.}
    \label{fig:compare}
\end{figure}

In the next section we proceed to show that even stronger bounds can be derived from Corollary~\ref{C:stepByStep}, and although we are not aware of an analytical form for these bounds, they can nevertheless be efficiently approximated using standard methods.


\subsection{Numerically derived properties}\label{S:numerical}

Here we take an alternative look at Corollary~\ref{C:stepByStep}, observing that this system of inequalities can be viewed as a set of constraints on the function values and gradients.
These constraints can then be used to form bounds by holding some elements in the inequalities as known and optimizing over the unknown elements.



For example, in order to derive bounds on the value of $f(y)$ given that the values of $x$, $f(x)$, $f'(x)$, $y$ and $f'(y)$ are known we 
can take inequalities~\eqref{C:upperbound} and~\eqref{C:lowerbound} as constraints on the value of $f(y)$ treating this value and the values of $f(x_i)$ and $f'(x_i)$ as unknowns, which we denote by $f_i$ and $g_i$, respectively. As established by Corollary~\ref{C:numericalBound} below, an upper bound on the value of $f(y)$ can be obtained by solving the following quadratically-constrained convex optimization problem:
\begin{equation*}
\begin{aligned}
U_N(x, f_x, g_x, y, g_y) :=
        \max_{g_i \in \real^d, f_i\in \real} &\ f_N \\
        \text{s.t. }
        & \textstyle \frac{1}{2L} \|g_i - g_{i+1}\|^2 \leq f_i-f_{i+1} - \frac{1}{N} \langle g_{i+1}, x - y \rangle, && 0\leq i < N , \\
        & \textstyle \frac{1}{2L} \|g_i - g_{i+1}\|^2 \leq f_{i+1}-f_i - \frac{1}{N} \langle g_{i}, y - x \rangle, && 0 \leq i < N, \\
        & f_0 = f_x, \ g_0 = g_x, g_N = g_y.
\end{aligned}
\end{equation*}
Similarly, a lower bound on $f(y)$ can be established by solving $B_N(x, f_x, g_x, y, g_y)$, which is the symmetric problem with a $\min$ operator instead of the $\max$ operator.
\begin{corollary}\label{C:numericalBound}
\begin{enumerate}
    \item 
Let $f:C \rightarrow \real$ be an $L$-smooth and convex function over an open convex set $C \subsetneq \real^d$, and let $x, y\in C$. Then 
    \[
        B_N(x, f(x), f'(x), y, f'(y)) \leq f(y) \leq U_N(x, f(x), f'(x), y, f'(y)),
    \]
    for all values of $N$ satisfying
\[
    N > \frac{\|y-x\|}{\min(\mathrm{dist}(x, \real^d \setminus C), \mathrm{dist}(y, \real^d \setminus C))}.
\]
    \item Suppose $x,y\in \real^d$, $f_x, f_y\in \real$, $g_x, g_y\in \real^d$, $L>0$ and $N>0$ are given such that
\[
    B_N(x, f_x, g_x, y, g_y) \leq f_y \leq U_N(x, f_x, g_x, y, g_y).
\]
    Then there exists a function $F:\real^d\rightarrow \real$ that is $L$-smooth and convex over the segment $[x,y]$, and satisfies
    \begin{align*}
        & F(x) = f_x, \ F(y) = f_y, \\
        & F'(x) = g_x,\ F'(y) = g_y.
    \end{align*}
\end{enumerate}
\end{corollary}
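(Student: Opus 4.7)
\emph{Part~1.} This is a direct consequence of Corollary~\ref{C:stepByStep}. Setting $x_i := x + (i/N)(y-x)$, $f_i := f(x_i)$, $g_i := f'(x_i)$ for $i=0,\dots,N$, the bound on $N$ is precisely what Corollary~\ref{C:stepByStep} requires, so this tuple satisfies both families of inequalities~\eqref{C:upperbound} and~\eqref{C:lowerbound}. Since $f_0=f_x$, $g_0=g_x$, $g_N=g_y$ match the boundary data while $f_N = f(y)$, the tuple is feasible for both optimization problems with objective equal to $f(y)$, immediately giving $B_N \le f(y) \le U_N$.

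\emph{Part~2.} The plan is to first realize $f_y$ as the $f_N$-coordinate of a feasible tuple, then apply the classical realizability of~\eqref{eq:basicinequnconstrained} pair by pair, then glue. For the first step, observe that the feasible set of tuples $(f_i, g_i)_{i=0}^N$ with $f_0=f_x$, $g_0=g_x$, $g_N=g_y$ fixed is convex (each constraint is a convex quadratic inequality), so the set of attained values of $f_N$ is a convex subset of $\real$ with $B_N$ as its infimum and $U_N$ as its supremum. Any $f_y \in [B_N, U_N]$ is therefore realized by some feasible tuple (a routine perturbation/limiting argument handles the endpoints if the projection happens to be open). Fix such a tuple.

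For each $i$, constraints~\eqref{C:upperbound} and~\eqref{C:lowerbound} applied to the pair $(x_i,f_i,g_i),(x_{i+1},f_{i+1},g_{i+1})$ are exactly inequality~\eqref{eq:basicinequnconstrained} taken in both directions, so by the unconstrained realizability result of~\cite{taylor2017smooth} cited in the introduction, there exists an $L$-smooth and convex function $H_i : \real^d \to \real$ matching the prescribed values and gradients at $x_i$ and $x_{i+1}$. I would then glue these pieces into $F$ by setting $F(z) := H_i(z)$ whenever $\langle z-x, u\rangle \in [(i/N)s, ((i+1)/N)s]$, where $u := (y-x)/\|y-x\|$ and $s := \|y-x\|$ (with $H_0$ used on the half-space $\langle z-x,u\rangle \le 0$ and $H_{N-1}$ on $\langle z-x,u\rangle \ge s$, and any convenient convention on the interior boundary hyperplanes, which are harmless as shown below).

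The remaining verification is that $F$ is $L$-smooth and convex along $[x,y]$ and has the correct boundary data. At each breakpoint $x_j$ the neighboring pieces $H_{j-1}$ and $H_j$ share both the value $f_j$ and the gradient $g_j$, so $F$ is differentiable there with $F'(x_j) = g_j$; in particular $F(x)=f_x$, $F(y)=f_y$, $F'(x)=g_x$, $F'(y)=g_y$. The gradient $F'$ is $L$-Lipschitz on $[x,y]$: within each sub-segment this is inherited from the $L$-smoothness of $H_i$, and across sub-segments it follows from the triangle inequality, with no loss because the collinearity of the $x_i$ makes the intermediate lengths sum exactly to the total distance. Convexity on $[x,y]$ reduces to one-dimensional convexity of the restriction, which holds because each piece is convex and the pieces glue $C^1$, so the piecewise derivative along the segment is monotone. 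The main obstacle is exactly this gluing step: one cannot hope for $F$ to be globally $L$-smooth and convex (the counter-example of Section~2 forbids it), but the statement only requires the properties along the one-dimensional segment $[x,y]$, which is precisely what the slab-gluing construction delivers.
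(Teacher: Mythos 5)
Your Part~1 is exactly the paper's argument. Your Part~2 is correct in substance but takes a genuinely different route from the paper's proof in Appendix~\ref{S:realizableProof}, in two respects. (i) To realize an intermediate value $f_y$, the paper builds two functions $F_u$ and $F_b$ attaining the extreme values $U_N$ and $B_N$ (from optimal solutions of the two programs) and takes the convex combination $\lambda F_u+(1-\lambda)F_b$, which preserves $L$-smoothness, convexity and the boundary data; you instead pick a feasible tuple with $f_N=f_y$ directly from the convexity of the feasible set. Both work, but your parenthetical about a ``routine perturbation/limiting argument'' for the endpoints is not a fix if the projection were genuinely open (a limit of tuples realizing values approaching $U_N$ need not be handled by perturbation); what actually closes the gap is that the feasible set is compact --- summing the two constraints for index $i$ gives $\|g_i-g_{i+1}\|\leq L\|y-x\|/N$, which together with the fixed $f_0,g_0$ bounds all variables --- so the projection onto $f_N$ is a closed interval. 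The paper makes this same compactness observation to justify attainment of the optimum, so you should state it rather than wave at it. (ii) For the gluing, the paper defines the piecewise interpolant only on the segment and invokes Whitney's extension theorem to obtain a function on $\real^d$, whereas you glue the interpolants $H_i$ over slabs of $\real^d$. Your $F$ is then discontinuous on the interior slab boundaries away from the breakpoints $x_j$; this is acceptable for the statement because $F$ is still Fr\'echet-differentiable at each $x_j$ (the two neighboring pieces share both value and gradient there) and the paper's definitions of convexity and $L$-smoothness over $[x,y]$ involve only values and gradients at points of the segment, all of which you verify correctly (the telescoping/collinearity argument for the Lipschitz bound and the monotone piecewise derivative for convexity are both sound). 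Your construction buys freedom from Whitney's theorem at the cost of a far less regular $F$ off the segment; it would be worth saying explicitly that $F$ need not even be continuous outside $[x,y]$.
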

\begin{proof}
\begin{enumerate}
    \item Let $x_i$ be defined as in Corollary~\ref{C:stepByStep}, i.e.,
\[
    x_i := x + \frac{i}{N}(y - x), \quad i=0,\dots,N.
\]
Taking 
\begin{align*}
    & \hat f_i := f(x_i), \quad i=0,\dots,N,\\
    & \hat g_i := f'(x_i), \quad i=0,\dots,N,
\end{align*}
we get from Corollary~\ref{C:stepByStep} that $\{\hat f_i\},\{\hat g_i\}$ are feasible for $U_N(x, f_x, g_x, y, g_y)$. Since the value of the objective at any feasible point is not larger than the optimal solution, we get $f(y) = f(x_N) = \hat f_N \leq U_N(x, f_x, g_x, y, g_y)$, establishing the upper bound. An identical argument establishes the lower bound.
    \item See Appendix~\ref{S:realizableProof}.
\end{enumerate}
\end{proof}


Note that the bound introduced in Corollary~\ref{C:numericalBound} holds for functions that are convex over open sets, however, we have only established that it can be realized by functions that are convex over a line segment.
We conjecture that, under the additional assumption that $U_N(x, f_x, g_x, y, g_y)$ is strictly feasible, the construction presented in Appendix~\ref{S:realizableProof} can be extended to an open set containing the segment, making the bound in Corollary~\ref{C:numericalBound} realizable by a smooth convex function over an open set.

Also note that the corollary above demonstrates the approach for the case where $f(y)$ is unknown while all other properties of $f$ at $x,y$ are known, however, by choosing alternative objective and constraints, the same idea can be generalized to allow finding bounds on any combination of the values $x, f(x), f'(x), y, f'(y)$ with any subset of them being known.

\paragraph{A numerical example}
Convex problems of the form $U_N$ and $B_N$ can be efficiently approximated by interior-point methods~\cite{Book:Nesterov,nesterov1994interior}, allowing the bound in Corollary~\ref{C:numericalBound} to be numerically evaluated given that all relevant quantities are known. 

To illustrate the performance of the bound, consider the case where it is known that $x=0$, $f'(x)=0$, $f(x)=0$, $\|y\|^2 = 1$, $\|f'(y)\|^2=0.5$ and $L=1$; Figure~\ref{fig:compareNumerical} summarizes the allowed range for $f(y)$ obtained from Theorem~\ref{T:global} versus the bounds derived by Corollary~\ref{C:numericalBound} with various values of $N$ and for values of $\langle f'(y), y\rangle \in [0.5, \sqrt{0.5}]$ (this corresponds to the interval where problems $U_N$ and $B_N$ are feasible).
As can be seen in the figure, the numerical bound provides a substantial improvement over the analytical one, especially for lower values of $\langle f'(y), y\rangle$. We also observe that the value of the bound appears to grow very slowly beyond the first few values of $N$, suggesting that a value of $N$ in the range 5--10 is sufficient for obtaining a highly accurate bound. Finally, note that the range for $N=1$ correspond to the unconstrained bound~\eqref{eq:basicinequnconstrained}.




\begin{figure}
    \centering
    \includegraphics[width=0.8\textwidth]{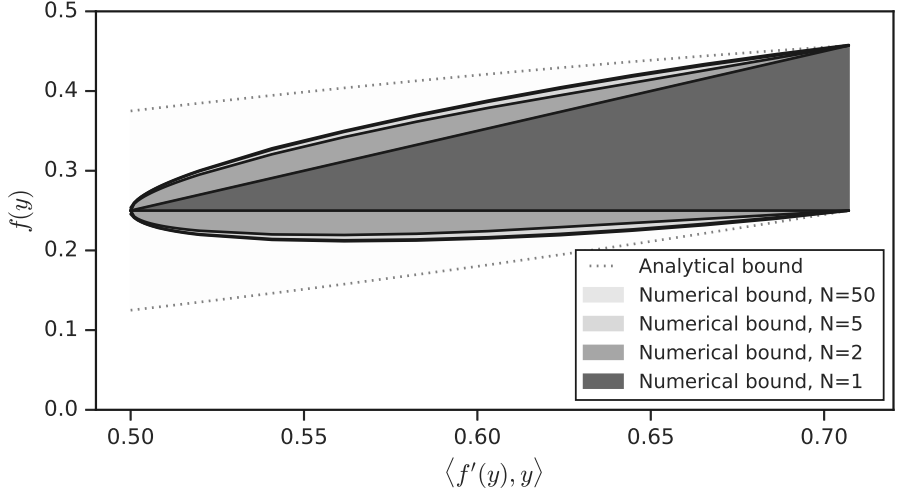}
    \caption{The allowed regions for $f(y)$  according to the analytical bound derived in Theorem~\ref{T:global} (dotted lines) and the numerically-derived bounds, plotted as a function of $\langle f'(y), y\rangle$, for the case $L=1$, $x=0$, $f'(x)=0$, $\|y\|^2=1$ and $\|f'(y)\|^2=0.5$. Note that the regions for $N=5$ and $N=50$ almost completely overlap.}
    \label{fig:compareNumerical}
\end{figure}







\section{Conclusion}

We have established some new properties for the class of smooth convex functions over an open set, showing that this class is essentially different than the class of unconstrained smooth convex functions.

These results emphasize the importance of treating this class independently from the class of unconstrained convex functions as is often done in standard texts.
In particular, regarding optimization methods, these results suggest that there is benefit in designing specialized methods for each of these two classes of functions, as methods designed for the unconstrained case can make additional assumptions not available in the general case.
Moreover, these results show that some care is needed when designing methods for constrained problems, for example, a standard approach for tackling constrained optimization problems is assuming that the problem can be defined by a composite model of the form $F(x)=f(x)+g(x)$, where $f$ is smooth and $g$ is an indicator function that encodes the constraints: this approach, although being natural, limits the applicability of the method to the unconstrained class of functions.

Finally, another interesting question that arises is finding a tight lower complexity bound for the class of constrained functions (in the sense defined by Nemirovsky and Yudin~\cite{nemi-yudi-book83}), and whether it differs significantly from the bound in the unconstrained case that was recently established in~\cite{drori2017exact}.





\appendix

\section{A technical lemma}
\begin{lemma}\label{L:sum}
Suppose $f:C \rightarrow \real$ is a 1-smooth convex function. Further suppose that $x,y\in C$ and that $f'(x)=0$.
If $\xi$, $\alpha_i$ are defined according to~\eqref{E:proof:alpha}, then
\[
    \sum_{i=0}^{N-1} \frac{(\alpha_i- \xi + i + 1)^2}{2\alpha_i + 1} = \frac{\langle f'(y), y-x \rangle^2 }{\|y-x\|^4} N^2.
\]
\end{lemma}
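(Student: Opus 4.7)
The plan is to perform a case analysis on the definition $\alpha_i = \max(0, \xi - i - 1, i - \xi)$, which neatly splits the index set $\{0,\ldots,N-1\}$ into three ranges in each of which the summand simplifies dramatically.

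First, I would verify that $\xi \in [0, N]$ so that the case analysis is meaningful. The convexity of $f$ combined with $f'(x)=0$ makes $x$ a global minimizer, giving $\langle f'(y), y-x\rangle \geq f(y) - f(x) \geq 0$. On the other hand, $1$-smoothness together with Cauchy-Schwarz yields
\[
    \langle f'(y), y-x\rangle = \langle f'(y) - f'(x), y-x \rangle \leq \|f'(y)-f'(x)\|\|y-x\| \leq \|y-x\|^2.
\]
Thus $N - \xi = N\langle f'(y), y-x\rangle/\|y-x\|^2 \in [0, N]$.

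Next, I would split the sum according to the position of $i$ relative to $\xi$. If $i + 1 \leq \xi$ then $\alpha_i = \xi - i - 1$, so $\alpha_i - \xi + i + 1 = 0$ and the summand vanishes. If $i \geq \xi$ then $\alpha_i = i - \xi$, so the numerator $\alpha_i - \xi + i + 1 = 2(i-\xi)+1$ equals $2\alpha_i + 1$, and the summand collapses to $2(i-\xi) + 1$. Finally, if $i < \xi < i + 1$ (which affects at most one index), then $\alpha_i = 0$ and the summand equals $(i + 1 - \xi)^2$.

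Setting $k = \lfloor \xi \rfloor$, the total reduces to $(k+1-\xi)^2 + \sum_{i=k+1}^{N-1}\bigl(2(i-\xi)+1\bigr)$ when $\xi \notin \mathbb{Z}$, and to $\sum_{i=k}^{N-1}\bigl(2(i-\xi)+1\bigr)$ when $\xi = k \in \mathbb{Z}$. Using the elementary identity $\sum_{i=m}^{N-1}(2i+1) = N^2 - m^2$ and expanding the quadratic $(k+1-\xi)^2$, both expressions collapse to $(N-\xi)^2$. Substituting back $N - \xi = N\langle f'(y), y-x\rangle/\|y-x\|^2$ yields the claimed formula. The main obstacle is bookkeeping rather than insight: one must handle the boundary cases (integer $\xi$, and in particular $\xi = 0$ or $\xi = N$) consistently and verify that the resulting $(N-\xi)^2$ emerges identically from both the integer and non-integer sub-cases.
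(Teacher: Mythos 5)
Your proposal is correct and follows essentially the same route as the paper's proof: both exploit the piecewise form of $\alpha_i$ to split the sum into a vanishing initial block, a single transitional term, and a tail of shifted odd numbers that telescopes to $(N-\xi)^2$, after first checking $\xi\in[0,N]$ via convexity and $1$-smoothness. Your explicit treatment of the integer-$\xi$ boundary case is slightly more careful than the paper's choice of a single index $N_1$ with $\alpha_{N_1}=0$, but the argument is the same.
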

\begin{proof}
First, we observe that
\[
    \alpha_i =
    \begin{cases}
        \xi - i - 1, & 0\leq i < \xi - 1, \\
        0,  & i \in [\xi-1,\xi],\\
        i - \xi, & \xi < i \leq N-1.
    \end{cases}
\]
Now, from the convexity and Lipschitz continuity properties of $f$ we have
\[
    0 \leq \langle f'(y), y-x \rangle = \langle f'(y)-f'(x), y-x \rangle \leq \|y-x\|^2,
\]
which implies $0 \leq \xi \leq N$, and thus the set $\{i: \alpha_i=0,\ 0\leq i\leq N-1\}$ is not empty.
We conclude that there exits some $0\leq N_1\leq N$ such that 
\[
    \alpha_i =
    \begin{cases}
        \xi - i - 1, & 0\leq i < N_1, \\
        0,  & i = N_1, \\
        i - \xi, & N_1 < i \leq N-1.
    \end{cases}
\]
We have
\begin{align*}
    & \sum_{i=0}^{N-1} \frac{(\alpha_i - \xi + i + 1)^2}{2\alpha_i + 1} \\
    & = \sum_{i=0}^{N_1-1} \frac{(\alpha_i - \xi + i + 1)^2}{2\alpha_i + 1} + (N_1 - \xi + 1)^2 +\sum_{i=N_1+1}^{N-1} \frac{(\alpha_i - \xi + i + 1)^2}{2\alpha_i + 1} \\
    & = (N_1 - \xi + 1)^2 +\sum_{i=N_1+1}^{N-1} \frac{(2i - 2 \xi + 1)^2}{2 i - 2\xi + 1} \\
    & = (N_1 - \xi + 1)^2 +2 \sum_{i=N_1+1}^{N-1}  i - (N-N_1-1) \left(2 \xi - 1\right) \\
    & = (N_1 - \xi + 1)^2 + (N-N_1-1) (N+N_1) - (N-N_1-1)\left(2 \xi - 1\right) \\
    & = (\xi - N)^2 = \frac{\langle f'(y), y-x \rangle^2 }{\|y-x\|^4} N^2.
\end{align*}
\end{proof}

\section{Proof of part 2 in Corollary~\ref{C:numericalBound}}\label{S:realizableProof}
Let $\{\tilde f_i,  \tilde g_i\}$ be an optimal solution for $U_N(x, f_x, g_x, y, g_y)$ (this program attains its optimum since its domain is bounded and closed) and set $x_i$ as in the first part of the proof. From the constraints in $U_N$ it follows that for all $0\leq i\leq N-1$
\begin{align*}
    & \frac{1}{2L} \|\tilde g_i-\tilde g_{i+1}\|^2 \leq \tilde f_i-\tilde f_{i+1}-\langle \tilde g_{i+1}, x_i-x_{i+1}\rangle, \\
    & \frac{1}{2L} \|\tilde g_i-\tilde g_{i+1}\|^2 \leq \tilde f_{i+1}-\tilde f_i-\langle \tilde g_i, x_{i+1}-x_i\rangle,
\end{align*}
hence by the convex extension/interpolation theorems of \cite{azagra2017extension,taylor2017smooth} it follows that there exist $L$-smooth and convex functions $F_i:\real^d \rightarrow \real$, $i=0,\dots,N-1$, each extending/interpolating the solution between two adjacent points:
\begin{align*}
    & F_i(x_i) = \tilde f_i,\ F_i(x_{i+1}) = \tilde f_{i+1}, \quad i=0,\dots,N-1, \\
    & F'_i(x_i) = \tilde g_i,\ F_i'(x_{i+1}) = \tilde g_{i+1}, \quad i=0,\dots,N-1.
\end{align*}
Taking $F_u$ to be the piecewise function
\[
    F_u(x) := \{ F_i(x),\ x\in [x_i, x_{i+1}], \quad i=0,\dots,N-1,
\]
it follows from this construction that $F_u$ is continuously differentiable and piecewise $L$-smooth and convex hence, as in the proof of Proposition~\ref{P:counter}, $F_u$ is $L$-smooth and convex. Furthermore, from the constraints in $U_N$ we have 
\begin{align*}
    & F_u(x_0)=F_0(x_0)=\tilde f_0 = f_x,\ F_u'(x_0) = F_0'(x_0) = \tilde g_0 = g_x, \\
    & F_u'(x_N)=F_{N-1}'(x_N)=g_{N}=g_y,
\end{align*}
and from the optimality of $\{\tilde f_i, \tilde g_i\}$, we have $F_u(x_N) = \tilde f_N = U_N(x, f_x, g_x, y, g_y)$. Finally, by Whitney's extension theorem~\cite{whitney1934analytic}, $F_u$ can be extended to the entire space $\real^d$ while keeping its differentiable structure.
Similarly, a function $F_b:\real^d\rightarrow\real$ that is $L$-smooth and convex over $[x,y]$ can be found such that
$F_b(x_0)=\tilde f_0 = f_x$, $F_b'(x_0) = \tilde g_0 = g_x$, $F_b'(x_N)=g_y$ and $F_b(y)=B_N(x, f_x, g_x, y, g_y)$.
Finally, taking a convex linear combination of the two functions $F_u$ and $F_b$, we can reach a function $F$ with the claimed properties.




\bibliographystyle{abbrv}
\bibliography{bib}{}   

\end{document}